\documentclass[letterpaper, 10 pt, journal, twoside]{ieeetran}

\IEEEoverridecommandlockouts                              

\usepackage[noadjust]{cite}
\usepackage{graphicx} 
\usepackage{epstopdf}
\usepackage{subcaption}
\usepackage{amsmath}
\usepackage{amssymb}
\usepackage{amsthm}
\usepackage{algorithm}
\usepackage{algpseudocode}
\usepackage{changepage}
\usepackage{lipsum}

\algrenewcommand\algorithmicrequire{\textbf{Input:}}
\algrenewcommand\algorithmicensure{\textbf{Output:}}

\newcounter{tempEquationCounter}
\newcounter{thisEquationNumber}

\newtheorem{lemma}{Lemma}
\newtheorem{theorem}{Theorem}

\newtheorem{proposition}{Proposition}

\newtheorem{corollary}{Corollary}

\usepackage{color}
\newcommand{\pedro}[1]{{\color{black}#1}}

\usepackage{float}
\newfloat{algorithm}{t}{lop}

\usepackage{color}
\definecolor{wheat}{rgb}{0.96,0.87,0.70}



\title{\pedro{Statistical Consistency of Set-Membership Estimator for Linear Systems}}
\author{Pedro Hespanhol and Anil Aswani%
\thanks{This material is based upon work partially supported by the National Science Foundation under Grant CMMI-1847666.}
\thanks{Pedro Hespanhol and Anil Aswani are with the Department of Industrial Engineering and Operations Research, University of California, Berkeley 94720
        {\tt\small \{pedrohespanhol,aaswani\}@berkeley.edu}}%
}

\begin{document}
\newcounter{notecounter}
\pagestyle{empty}
\maketitle
\thispagestyle{empty}
\pagestyle{empty}

\pedro{\begin{abstract}
Suppose we can choose from a set of linear autonomous systems with bounded process noise, the dynamics of each system are unknown, and we would like to design a stabilizing policy. The underlying question is how to estimate the dynamics of each system given that measurements of each system will be nonsequential. Though seemingly straightforward, existing proof techniques for proving statistical consistency of system identification procedures fail when measurements are nonsequential. Here, we prove that the set-membership estimator is statistically consistent even when measurements are nonsequential. We numerically illustrate its strong consistency. 
\end{abstract}}

\begin{IEEEkeywords}
Estimation, Identification, Linear Systems
\end{IEEEkeywords}

\section{INTRODUCTION}

\label{sec1}


\IEEEPARstart{L}{earning-based} control has seen a resurgence in the past few years \cite{aswani2013_automatica,aswani2016b,ouyang2017learning,abbasi2018regret} because of recent advances in system identification using machine learning and artificial intelligence.  When the system has unknown dynamics, it becomes paramount to identify the underlying dynamics so that an appropriate controller can be computed in order to make the system stable \cite{ljung1987system}. System identification has become a central field of research lying in between control and statistics.

Here, we consider a fully observed switched autonomous linear system with bounded process noise. Each linear system is unknown to us, but we control switching between different linear dynamics. This departs from the existing literature on switched system identification, where the switching control is fixed and must also be estimated \cite{vidal2008recursive,garulli2012survey,lauer2018hybrid}. Here, a control decision needs to be chosen together with the system identification. The dynamics for a single system may have a mix of stable or unstable modes and repeated eigenvalues. Identification can be done via estimation of the transition matrices \cite{kumar1990convergence,soderstrom2012discrete}, and identification of transition matrices for \emph{stable} systems has been studied \cite{ljung1987system,soderstrom1989system,basu2015regularized,zorzi2017sparse}.
 
The identification problem in our setup is particularly challenging because the switching can cause stability/instability independent of the eigenvalues of each linear system \cite{bengea2005optimal}. The study of system identification for unstable systems is not as prolific as work on the stable case. Existing work for the unstable case of identification of a single linear system requires strong assumptions on repeated eigenvalues in order to prove asymptotic convergence \cite{lai1985asymptotic}, derive associated limiting distributions of the estimates of the model parameters \cite{buchmann2007asymptotic,buchmann2013unified}, and in order to generalize the result to other classes of transition matrices \cite{lai1983asymptotic,nielsen2005strong,nielsen2006order}.


{\color{black}\subsection{Linear System Identification}

Recent work \cite{faradonbeh2018finite,simchowitz2018learning,simchowitz2019learning,oymak2018non} has shown the difficulty of identification for unstable linear systems when state observations are restricted to a single trajectory: Ordinary least squares (OLS) is statistically inconsistent when the dynamics have repeated unstable dynamics \cite{nielsen2008singular,phillips2013inconsistent}, and this causes poor estimation when the dynamics have unstable modes with close eigenvalues. This can be partly overcome using instrumental variables, but this cannot handle systems matrices with eigenvalues both inside and outside the unit circle \cite{phillips2013inconsistent}.

The set-membership estimator \cite{bertsekas1971recursive,milanese1991optimal,aswani2017statistics} exploits boundedness of the noise vector. This estimator has been studied in \cite{deller1994unifying,lin1998consistently} which provided a bounding ellipsoidal algorithm to obtain consistent estimators. Our work is related to previous studies where such estimators are applied, as in fault detection tests \cite{blesa2012robust}, regularized regression \cite{beck2007regularization}, robust estimation \cite{garulli2000conditional,tjarnstrom2002mixed}, and kernel-based methods \cite{chen2014system}. The work in \cite{ozay2011sparsification} provides a greedy algorithm that uses a set-membership estimator to identify input-output models.

\subsection{Contributions}

Our main contribution is to prove (strong) statistical consistency of the set-membership estimator for switched linear autonomous systems, where measurements are not sequential and the system modes may be unstable. In past work, either the measurements were assumed to be sequential or statistical consistency was not proved. We use the idea behind Wald's Theorem \cite{wald1949note} to develop a novel consistency proof, in a way not done in other works \cite{milanese1991optimal,beck2007regularization}; however, Wald's Theorem itself does not apply to set-membership estimation, which imposes one constraint for each measurement, and only holds for estimators that minimize a lower semicontinuous loss.

To show a setting where the set-membership estimator is useful, we present a control policy that uses this estimator on a switched linear system. Our policy is a greedy bandit algorithm that uses the set-membership estimator to identify in finite-time the stable mode of the linear system. Our analysis is similar to recent work on greedy bandits \cite{faradonbeh2018finite,simchowitz2018learning}. The key difference in our setting is the state observations for each controller are not sequential, and so this means that OLS is not consistent for the matrix estimates in this setting.}

\subsection{Outline}

Sect. \ref{sec2} defines our notation, and Sect. \ref{Sec3} defines our problem setup. In Sect. \ref{sec4} we provide our proposed estimator and prove its statistical consistency. Next, in Sect. \ref{sec5} we numerically illustrate the strong consistency of the estimator.

\section{Notation}

\label{sec2}

We use $\|\cdot\|$ to denote the spectral norm of a matrix, which is the largest singular value of a matrix. We use the function $\rho(A)$ to denote the spectral radius of a matrix A. For a matrix $A$ we let $(A)_{ij}$ denote the $(ij)$-element of A. \pedro{For two sets $A$ and $B$, we denote their Minkowski sum by $A\oplus B$. Furthermore, the volume of set $A$ is $\mathrm{vol}(A)$. For a matrix $T$ and a set $A$, we define the set $TA := \{T\cdot a : a \in A\}$.} 

For matrix $A \in \mathbb{R}^{d \times d}$, let $v(A) \in \mathbb{R}^{d^2}$ be a vectorization that stacks elements of $A$ into a vector. For vector $u\in\mathbb{R}^{d^2}$, let $m(u) \in \mathbb{R}^{d\times d}$ be a matricization that folds elements of \pedro{$u$} into a matrix. We assume that $m\circ v(A) = A$ and $v \circ m(u) = u$.  Let \pedro{$\overline{\mathbb{R}}_{+} = \mathbb{R}_{+} \cup \{+\infty\}$} be the extended nonnegative real line. A function $f: \mathcal{D} \rightarrow \overline{\mathbb{R}}$ is lower semicontinuous (lsc) at $\overline{x}$ if and only if $\lim \inf_{x\rightarrow \overline{x}} f(x) \geq f(\overline{x})$. 

Next, we construct a compactification of $\mathbb{R}^n$ by defining \pedro{$\mathbf{A}^n = \mathbb{S}^{n-1}\times\overline{\mathbb{R}}_{+}$}, which directly compactifies \pedro{$\mathbb{S}^{n-1}\times{\mathbb{R}}_{+}$}. Note $\mathbf{A}^n$ can be shown to be equivalent to the \emph{cosmic closure} of $\mathbb{R}^n$, as defined in \cite{rockafellar2009variational}. To see why $\mathbf{A}^n$ is a compactification, note we can think of the $\mathbb{S}^{n-1} = \{v \in \mathbb{R}^n : \|v\|_2 = 1\}$ component as a direction of a vector and the \pedro{$\overline{\mathbb{R}}_+$} component as a length of the vector. Thus our idea is to formally use $\{\lambda v : (v,\lambda)\in\mathbf{A}^n\}$ as a compactification of $\mathbb{R}^n$.
\pedro{We define the expectation $\mathbb{E}[\cdot]$}. We use a.s. to denote ``almost surely'', and we use i.i.d. to denote ``independent and identically distributed''. 


\section{Problem Setup}

\label{Sec3}
Consider a fully observed switched linear system
\begin{equation} \label{LTI-SYS}
X_{t+1} = A_{\alpha_t} X_t + w_t
\end{equation}
where $X_t \in\mathbb{R}^d$ is the state, $w_t\in\mathbb{R}^d$ is the i.i.d. process noise, and $\alpha_t \in \{1,...,q\}$ is the control input that selects one of the (unknown to us) state dynamics \pedro{matrices} $A_1,...,A_q$. We assume $w_t$ lies in a (known to us) compact, convex set $\textbf{W} \subset \mathbb{R}^{d}$ that has a strict interior. Also, the $w_t$ has a (potentially unknown to us) p.d.f $w_t \sim f(w)$, where $\mathbb{E}[w_t] = 0$ and $f(w)>0$ for all $w \in \textbf{W}$; this assumption is mild for set-based estimation \cite{aswani2017statistics} and ensures the existence of a nonzero lower bound on the p.d.f.

Our goal is to estimate the matrices $A_1,...,A_p$, and we consider the situation where a subset of the matrices is unstable. In practical control applications, it is important to be able \pedro{to} precisely characterize the dynamics of each matrix so as to be able to design a stabilizing controller. Moreover, we wish to do the estimation without resetting the system (i.e., using a single state trajectory) and be able to do so given any arbitrary switching control input sequence $\{\alpha_t\}_{t \geq 0}$. 

Given an arbitrary (known to us) sequence of switching control inputs $\{\alpha_0,...,\alpha_{T-1}\}$ of length $T$, we collect the state measurements $\{x_{0},x_{1},...,x_{T}\}$. In order for the problem to be well-posed, we assume each linear system is selected at least $d$ times. Notationally, we organize measurements into groups where measurement pairs from the same linear system are grouped together: For each system $p$, we define the sequence of measurement pairs $\{(Y^{(p)}_i,X^{(p)}_i)\}_{i=1}^{n_p}$, where $n_p$ is the number of measurement pairs associated with system $p$. It is essential to note that for any $p$, a pair $(Y^{(p)}_i,X^{(p)}_i)$ is composed of successive observations of the system
\begin{equation}
Y^{(p)}_i = A_pX^{(p)}_i + w_i.
\end{equation}
Note $\{(Y^{(p)}_i,X^{(p)}_i)\}_{i=1}^{n_p}$ are generally not successive since $X^{(p)}_{i+1},Y^{(p)}_i$ are usually not the same because there may be an arbitrary number of switches between observations. Past consistency proofs for unstable systems (see for instance \cite{lai1983asymptotic}) require sequential measurements: These proofs separate the state dynamics into stable and unstable modes and then invert the unstable modes so that all the necessary quantities in the proof remain finite. When there is arbitrary switching, it is no longer possible to separate stable and unstable modes.

\section{Proposed Estimator and Consistency Proof}

\label{sec4}

Nonsequential observations makes system identification more challenging than estimation of autoregressive models. One naive approach is to use OLS for each group of data. This approach is inconsistent for general $A$ matrices \cite{nielsen2008singular,phillips2013inconsistent}, specifically $A$ with multiple geometric roots in the eigenvalue structure of the unstable matrix. These issues with OLS are numerically illustrated in Sect. \ref{sec5}.  Here, we provide an estimator that uses the boundedness of the disturbance vectors to overcome past issues. We prove consistency by adapting a celebrated argument by Wald \cite{wald1949note}, which is substantially different than typical analysis \cite{faradonbeh2018finite,simchowitz2018learning,simchowitz2019learning}.

\subsection{Set-Membership Estimator}

We focus our analysis on a single group $p$, and so we drop the superscript for ease of notation. Let $\{(Y_i,X_i)\}_{i=1}^{n}$ be our sequence of measurements
We let the associated true dynamics matrix $A_p$ be labeled as $A_0$. Hence it follows that
\begin{equation}
Y_i = A_{0}X_i + w_i, \ \text{for } i \in \{1,...,n\}.
\end{equation}
\pedro{Once again, we note the measurements pairs $(Y_i,X_i)$ and $(Y_{i+1},X_{i+1})$ are neither independent nor consecutive (in time) for any $i$, in general}. We propose to estimate $A_0$ by the minimizer to
\pedro{\begin{equation}
\label{eqn:est}
\begin{aligned}
\widehat{A} \in \arg\min_{A \in \mathbb{R}^{d\times d}}\ &\textstyle\frac{1}{n}\sum_{i=1}^{n}l(X_i,Y_i,A)\\
\mathrm{s.t.}\ & Y_{i} - AX_{i} \in \textbf{W} ,\ \text{for } i \in \{1,...,n\}\\
\end{aligned}
\end{equation}}
where $l(\cdot)$ is a loss function. For example, we may choose $l(X_i,Y_i,A) = \|Y_{i} - AX_{i}\|^2_2$. Observe that when $l(\cdot) \equiv 0$ this simply becomes a feasibility problem. We will first prove consistency of the feasibility version of this problem, which will imply consistency for well-behaved loss functions.

This is a \emph{set-membership estimator} and has been studied from the deterministic perspective \cite{bertsekas1971recursive,milanese1991optimal}. It uses the \textit{a priori} knowledge that process noise belongs to a compact convex set, in order to enforce constraints associated with each measurement pair. Here, we prove statistical consistency for this estimator when applied to this general setting of nonsequential and non-independent sequence of measurements pairs. In particular, by compactifying the domain of the optimization problem we are able to analyze the estimator by considering the statistics at only a finite number of points.

\pedro{\subsection{Local Identifiability of Problem Setup}}

\pedro{We begin by explicitly writing the feasibility version of the estimation but over a compactified domain:
\begin{equation}
\label{eqn:fe}
\begin{aligned}
\widehat{A} \in \arg\min_{A}\ &\textstyle\frac{1}{n} \sum_{i=1}^{n} \delta_{\textbf{W}}( Y_{i} - A X_{i})\\
\mathrm{s.t.}\ & A \in \{\lambda\cdot m(v): (v,\lambda)\in\mathbf{A}^{d^2}\}
\end{aligned}
\end{equation}}
where we define $\delta_{\textbf{W}}: \mathbb{R}^n \rightarrow \overline{\mathbb{R}}$ to be the indicator
\begin{equation}
\delta_{\textbf{W}}(u) = \begin{cases}
0, &\text{if } u \in \textbf{W}\\
+\infty , &\text{otherwise}
\end{cases}
\end{equation}
\pedro{As discussed in the next subsection, this compactification is required for the proof technique we use.} For notation, let $L(X_i,Y_i,A) = \delta_{\textbf{W}}(Y_{i} - AX_{i})$. We also need to specify arithmetic \cite{rockafellar2009variational} for points $(v,+\infty)\in\mathbf{A}^{d^2}$. For any $(\overline{X},\overline{Y})$ and $\overline{A}\in\{\lambda\cdot m(v) : (v,\lambda)\in\mathbb{S}^{d^2-1}\times\{+\infty\}\}$, define 
\begin{equation}
\label{eqn:lscdefl}
L(\overline{X},\overline{Y},\overline{A}) = \liminf_{(X,Y,A)\rightarrow(\overline{X},\overline{Y},\overline{A})} L(X,Y,A).
\end{equation}
Next, for each subset $S\subseteq\mathbf{A}^{d^2}$ we define
\begin{equation}
\begin{aligned}
h(X,Y,S) = \inf\ &L(X,Y,A)\\
\mathrm{s.t.}\ & A \in \{\lambda\cdot m(v): (v,\lambda)\in S\}
\end{aligned}
\end{equation}
We begin by characterizing the function $L(X,Y,A)$.

\begin{lemma}
Function $L(X,Y,A)$ is lower semicontinuous. 
\end{lemma}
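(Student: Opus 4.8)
The plan is to use the topological characterization that a function is lower semicontinuous if and only if all of its sublevel sets are closed. The first reduction is to note that $\delta_{\textbf{W}}$ takes only the values $0$ and $+\infty$, and that a $\liminf$ of numbers drawn from $\{0,+\infty\}$ again lies in $\{0,+\infty\}$; hence, after the extension \eqref{eqn:lscdefl}, $L$ is $\{0,+\infty\}$-valued, so for $c\geq 0$ the sublevel set $\{L\leq c\}$ equals $\{L=0\}$ and for $c<0$ it is empty. Thus the entire lemma reduces to showing that $\{L=0\}$ is closed in the compactified domain $\mathbb{R}^d\times\mathbb{R}^d\times\mathbf{A}^{d^2}$.

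Next I would dispatch the ``finite part'' $U:=\mathbb{R}^d\times\mathbb{R}^d\times\big(\mathbb{S}^{d^2-1}\times\mathbb{R}\big)$. Recall the classical fact that $\delta_{\textbf{W}}$ is lsc on $\mathbb{R}^d$, since $\textbf{W}$ is compact, hence closed, so each sublevel set of $\delta_{\textbf{W}}$ is either empty or equals $\textbf{W}$. The map $(X,Y,v,\lambda)\mapsto Y-\lambda\,m(v)X$ is continuous on $U$ (it is polynomial in its arguments, with $m(\cdot)$ linear), so the restriction of $L$ to $U$ is the composition of an lsc function with a continuous one and is therefore lsc on $U$; equivalently, the set $C:=\{(X,Y,A)\in U : Y-AX\in\textbf{W}\}$ is relatively closed in $U$, being the preimage of the closed set $\textbf{W}$.

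It remains to handle the points at infinity, and the key structural facts are that $U$ is \emph{open} and dense in $\mathbb{R}^d\times\mathbb{R}^d\times\mathbf{A}^{d^2}$ (because $\mathbb{R}$ is open and dense in $\overline{\mathbb{R}}$) and that \eqref{eqn:lscdefl} is precisely the $\liminf$ of $L$ taken along points of $U$. Using $L\in\{0,+\infty\}$ one then computes, for a point $\overline{z}$ at infinity, that $L(\overline{z})=0$ exactly when every neighborhood of $\overline{z}$ meets $C$, i.e.\ exactly when $\overline{z}\in\overline{C}$. Combining this with the elementary fact that $\overline{C}\cap U = C$ (valid because $U$ is open and $C$ is closed in $U$), one obtains $\{L=0\}=\overline{C}$, which is closed; lower semicontinuity of $L$ follows.

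The part that needs the most care is the point-set topology on the cosmic closure $\mathbf{A}^{d^2}=\mathbb{S}^{d^2-1}\times\overline{\mathbb{R}}$: confirming that the finite part $U$ is open and dense in the full domain, that the $\liminf$ in \eqref{eqn:lscdefl} may be read as ranging over $U$ (which is what legitimizes the identity $\{L=0\}=\overline{C}$), and the relative-closure fact $\overline{C}\cap U=C$. Once these are in place, the remaining argument is just the standard statement that the lower envelope of a function is lsc, specialized here to a function valued in $\{0,+\infty\}$.
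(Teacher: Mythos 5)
Your proof is correct, but it takes a genuinely different route from the paper's. The paper argues pointwise: on the finite part it writes $L=\delta_{\textbf{W}}\circ(Y-AX)$ and invokes the fact that a composition of a continuous map with an lsc function is lsc, and at the points at infinity it simply declares that lower semicontinuity ``holds by the definition'' (\ref{eqn:lscdefl}). You instead use the closed-sublevel-set characterization, observe that $L$ is $\{0,+\infty\}$-valued so everything reduces to closedness of $\{L=0\}$, and then identify $\{L=0\}$ with the closure $\overline{C}$ of the feasible set $C=\{(X,Y,A)\in U: Y-AX\in\textbf{W}\}$. The trade-off is instructive: the paper's argument is two lines, but its treatment of the points at infinity is really an appeal to the general fact that the lower-envelope extension $\overline{z}\mapsto\liminf_{z\to\overline{z}}L(z)$ is automatically lsc at the extension points -- a true statement, but one whose proof requires a small diagonal argument over sequences of nearby points at infinity, which the paper leaves implicit. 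Your computation $\{L=0\}=\overline{C}$ (using $\overline{C}\cap U=C$, which holds because $C$ is relatively closed in $U$, plus the reading of the $\liminf$ in (\ref{eqn:lscdefl}) as ranging over the dense open finite part $U$) makes exactly that step explicit, at the cost of more point-set bookkeeping on the cosmic closure. Both arguments rest on the same two ingredients in the end: continuity of $(X,Y,v,\lambda)\mapsto Y-\lambda\, m(v)X$ for finite $\lambda$ and closedness of the compact set $\textbf{W}$. One small caveat worth a sentence in a final write-up: the parametrization $(v,\lambda)\mapsto\lambda\cdot m(v)$ is not injective (e.g.\ $(v,\lambda)$ and $(-v,-\lambda)$ give the same matrix, and $\lambda=0$ collapses the whole sphere), so you should state whether you are proving lsc of $L$ as a function of $(X,Y,v,\lambda)$ or of $(X,Y,A)$; since $L$ depends on $(v,\lambda)$ only through $\lambda\cdot m(v)$ this is harmless, and the paper is equally loose on this point.
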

\begin{proof}
Fix $(\overline{X},\overline{Y})$ and choose $\overline{A}\in\mathbb{R}^{d\times d}$. The function $Y-AX$ is continuous, and $\delta_\textbf{W}(u)$ is lower semicontinuous \cite{rockafellar2009variational}. Thus $L(\cdot)$ is lower semicontinuous at $(\overline{X},\overline{Y},\overline{A})$ since $L(X,Y,A) = \delta_\textbf{W}\circ(Y-AX)$. Next fix $(\overline{X},\overline{Y})$ and choose any $\overline{A}\in\{\lambda\cdot m(v) : (v,\lambda)\in\mathbb{S}^{d^2-1}\times\{+\infty\}\}$. Lower semicontinuity holds at this point by the definition (\ref{eqn:lscdefl}).
\end{proof}

Next define the extended real-valued function 
\begin{equation}
V(A) = \begin{cases} 0, &\text{if } A=A_0\\
+\infty, &\text{otherwise}\end{cases}
\end{equation}
and define $E(S) = \inf_{(v,\lambda)\in S}V(\lambda\cdot m(v))$. Proving statistical consistency requires verifying that some \emph{identifiability condition} holds \cite{bickel2015mathematical}, which means the underlying distributions are such that incorrect estimates are detected by measurements. If we define the mapping
\begin{equation}
\begin{aligned}
&\textstyle B_{n}(A) = \frac{1}{n} \sum_{i=1}^{n} L(X_i,Y_i,A)\\
&\textstyle H_{n}(S) = \frac{1}{n} \sum_{i=1}^{n} h(X_i,Y_i,S)
\end{aligned}
\end{equation}
\pedro{then we can prove a local identifiability condition holds.}
\pedro{\begin{proposition}
\label{ass:ic}
For any $A$ there is an open neighborhood $O(v,\lambda)\subset\mathbf{A}^{d^2}$, where $(v,\lambda)\in\mathbf{A}^{d^2}$ satisfies $A = \lambda\cdot m(v)$, such that $\lim_{n\rightarrow\infty}H_n(O(v,\lambda)) = E(O(v,\lambda)) = V(A) \text{ a.s}$.
\end{proposition}}

\begin{proof}
Let $(v_0,\lambda_0) \in \mathbf{A}^{d^2}$ be such that $\lambda_0\cdot m(v_0) = A_0$. Then $h(X_i,Y_i,O(v_0,\lambda_0)) \equiv 0$ for any open neighborhood $O(v_0,\lambda_0)$. This means that we immediately get that $\lim_{n\rightarrow\infty}H_n(O(v_0,\lambda_0)) = 0 = E(O(v_0,\lambda_0)) \text{ a.s.}$ 

Now consider any $A \neq A_0$, and let $t_i$ be the time of measurement $i$ for $i \geq 2$. Note $Y_i-AX_i = (A_0-A)X_{t_i} + w_{t_i}$, and $X_{t_i} = A_{\alpha_{{t_i}-1}}X_{{t_i}-1} + w_{{t_i}-1}$. Thus 
\begin{equation}
    Y_i - AX_i = (A_0-A)(A_{\alpha_{{t_i}-1}}X_{{t_i}-1}+w_{{t_i}-1}) + w_{t_i}.
\end{equation}
Let $\kappa = \min_{w\in\textbf{W}}f(w)$, and note $\kappa > 0$. The distribution of $Y_i-AX_i$ has support $\textbf{W} \oplus (A_0-A)\textbf{W} \oplus Z_i$ for $Z_i =  (A_0-A)A_{\alpha_{{t_i}-1}}X_{{t_i}-1}$. The key observation is that $\oplus Z_i$ translates the set $\textbf{W} \oplus (A_0-A)\textbf{W}$. \pedro{Let $\textbf{N}(A) = \{(u,v) \in \mathbb{R}^d\times\mathbb{R}^d : u + (A_0-A)v = 0\}$, and define $\textbf{V}(\textbf{S},A) = (\textbf{S} \oplus \textbf{N}(A)) \cap (\textbf{W}\times\textbf{W})$. Furthermore, define $\textbf{V}_i(A) = \{(u,v) \in \textbf{W}\times\textbf{W} : u + (A_0-A)v + Z_i\notin \textbf{W}\}$.
Thus we have 
\begin{multline}
\mathbb{P}[L(X_i,Y_i,A) = +\infty|T] = \\
\textstyle\int_{x}\mathbb{P}[L(X_i,Y_i,A) = +\infty|X_{t_i-1}=x,T]g(x)dx \geq \\
\textstyle\int_x[\int_{(u,v)\in \textbf{V}_i(A)} \kappa^2 dudv]g(x)dx \geq \\
\textstyle\int_x[\int_{(u,v)\in \textbf{V}(\textbf{J}(A),A)} \kappa^2 dudv]g(x)dx \geq \\
\textstyle\int_{(u,v)\in\textbf{V}(\textbf{J}(A),A)}\kappa^2dudv := c(A)
\end{multline}
for any event $T$ independent of $(w_{t-1},w_t)$, where $\textbf{J}(A) \in \arg\min_{\textbf{S}\subseteq\textbf{W}\times\textbf{W}} \{\mathrm{vol}(\textbf{V}(\textbf{S},A))\ |\ \mathrm{vol}(\begin{bmatrix} \mathbb{I} & (A_0-A)\end{bmatrix}\textbf{S}) = \mathrm{vol}(\textbf{W} \oplus (A_0-A)\textbf{W}) - \mathrm{vol}(\textbf{W})\}$, and $g(\cdot)$ is the p.d.f. of $X_{t_i-1}$ conditioned on $T$. Now define $B'_n(A) = \frac{1}{n/2-1}\sum_{k=2}^{n/2}L(X_{2k-1},Y_{2k-1},A)$ and note that by construction $(w_{t_{2k}-1},w_{t_{2k}})$ is independent of all $(X_{2k'-1},Y_{2k'-1})$ for $k' < k$. Thus for $n \geq 2$ we have
\begin{multline}
\mathbb{P}(B_n(A) = 0) \leq \mathbb{P}(B_n'(A) = 0) =\\
\mathbb{P}[L(X_{2\lfloor n/2\rfloor-1},Y_{2\lfloor n/2\rfloor-1},A) = 0 | B_{n-1}'(A) = 0]\times\\
\mathbb{P}(B_{n-1}'(A) = 0) \leq(1-c(A))\cdot\mathbb{P}(B_{n-1}'(A) = 0) \leq \\
\ldots \leq (1-c(A))^{\lfloor{n/2}\rfloor-1}.
\end{multline}
Noting $\mathrm{vol}(\textbf{W} \oplus (A_0-A)\textbf{W}) > \mathrm{vol}(\textbf{W})$, since $A\neq A_0$ and $\textbf{W}$ has a strict interior, then Fredholm's theorem for linear algebra implies $\mathrm{vol}(\textbf{V}(\textbf{J}(A),A)) > 0$. Hence $c(A) > 0$ and $\sum_{n=2}^\infty (1-c(A))^{\lfloor n/2\rfloor-1} < +\infty$. Thus the Borel-Cantelli lemma implies $B_n(A) = 0$ only finitely often. This proves $\lim_{n\rightarrow \infty}B_{n}(A) = V(A) = +\infty \text{ a.s.}$

Consider the same $A \neq A_0$, and define $(v,\lambda)$ so $\lambda\cdot m(v) = A$. Then for an open neighborhood $O(v,\lambda)$ we have $\textbf{Z}(v,\lambda) = \cap_{(u,\mu)\in O(v,\lambda)}\textbf{V}(\textbf{J}(\mu\cdot m(u)),\mu\cdot m(u))$ and
\begin{multline}
\label{eqn:pl2}
\mathbb{P}[h(X_i,Y_i,O(v,\lambda)) = +\infty|T] \geq \\
\textstyle\int_{(u,v)\in\textbf{Z}(v,\lambda)}\kappa^2dudv := d(O(v,\lambda)).
\end{multline}
By the Monotone Convergence Theorem, the open neighborhood $O(v,\lambda)$ can be chosen so $(v_0,\lambda_0)\notin O(v,\lambda)$ and so $d(O(v,\lambda)) > 0$. By a similar argument as before we have that $\mathbb{P}(H_n(O(v,\lambda)) = 0) \leq (1-d(A))^{\lfloor{n/2}\rfloor-1}$. So since $\sum_{n=2}^\infty (1-d(A))^{\lfloor{n/2}\rfloor -1} < +\infty$, the Borel-Cantelli lemma implies $H_n(O(v,\lambda)) = 0$ only finitely often. This proves $\lim_{n\rightarrow \infty}H_n(O(v,\lambda)) = E(O(v,\lambda)) = +\infty \text{ a.s.}$}
\end{proof}

\pedro{The above proposition establishes a local identifiability condition for our setup, namely a setting with nonsequential measurements and linear dynamics. The key intuition is that for any matrix $A$ the sample average $H_n(\cdot)$ converges to its ``expectation'' $E(\cdot)$ on some open neighborhood of $A$.}

\pedro{\subsection{Strong Statistical Consistency}}

\pedro{We are now in a position to prove our main theorem, which adapts the argument from the classical Wald Consistency Theorem \cite{wald1949note} and relies on the compactification of the feasible region. To understand the intuiton of why we compactify, recall that one definition of a compact set is a set where each of its open covers has a finite subcover. This is important for proving statistical consistency because, when parameters being estimated belong to a compact set, it allows us to perform an analysis only at a finite number of points in order to understand the global behavior. Compactification is important because it enables us to exploit this insight.}
\begin{theorem}
The feasibility estimator (\ref{eqn:fe}) is strongly consistent, meaning $\lim_{n\rightarrow \infty} \widehat{A} = A_{0} \text{ a.s.}$ or equivalently that $\mathbb{P}(\lim_{n\rightarrow \infty} \widehat{A} = A_{0}) = 1$.
\end{theorem}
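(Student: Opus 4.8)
The plan is to adapt Wald's classical consistency argument \cite{wald1949note} to the compactified feasibility problem (\ref{eqn:fe}). First note that, since $Y_i - A_0X_i = w_i \in \textbf{W}$, the true matrix $A_0$ is feasible and $B_n(A_0)=0$; because $L\geq 0$ (hence $B_n\geq 0$ and $H_n\geq 0$) and $L$ is lower semicontinuous by the preceding lemma, its liminf-extension to the compact space $\mathbf{A}^{d^2}$ is lower semicontinuous, so the minimum in (\ref{eqn:fe}) is attained and equals $0$; in particular $B_n(\widehat A)=0$ for any minimizer. It therefore suffices to show that for every open neighborhood $\mathcal N$ of $A_0$ in $\mathbb{R}^{d\times d}$ we have $\widehat A \in \mathcal N$ for all large $n$ almost surely. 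Applying this to the countable family $\mathcal N_k = \{A : \|A-A_0\|<1/k\}$ and intersecting the corresponding almost-sure events then yields $\lim_{n\to\infty}\widehat A = A_0$ a.s.

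So fix such an $\mathcal N$. Let $\mathcal N' = \{(v,\lambda)\in\mathbf{A}^{d^2} : \lambda\cdot m(v)\in\mathcal N\}$, which is an open set containing every representative $(v_0,\lambda_0)\in\mathbb{S}^{d^2-1}\times\mathbb{R}$ of $A_0$, and let $\mathcal K = \mathbf{A}^{d^2}\setminus\mathcal N'$, which is closed in the compact set $\mathbf{A}^{d^2}$ and hence compact. For each $(v,\lambda)\in\mathcal K$ we have $\lambda\cdot m(v)\neq A_0$, so by Assumption \ref{ass:ic} — taking the neighborhood small enough to exclude every representative of $A_0$, which is possible because $\mathbf{A}^{d^2}$ is compact metrizable and hence Hausdorff, and permissible because the analysis of the next subsection establishes the almost-sure limit identity for all sufficiently small neighborhoods — there is an open set $O(v,\lambda)\subset\mathbf{A}^{d^2}$ containing $(v,\lambda)$ with $E(O(v,\lambda)) = +\infty$ and $\lim_{n\to\infty}H_n(O(v,\lambda)) = +\infty$ a.s.

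Now extract a finite subcover $O_1,\dots,O_m$ of $\mathcal K$ from $\{O(v,\lambda):(v,\lambda)\in\mathcal K\}$, and work on the almost-sure event on which $H_n(O_j)\to+\infty$ for every $j$ (a finite intersection of almost-sure events). Suppose for some large $n$ that $\widehat A\notin\mathcal N$; then $\widehat A = \widehat\lambda\cdot m(\widehat v)$ for some $(\widehat v,\widehat\lambda)\in\mathcal K$, so $(\widehat v,\widehat\lambda)\in O_j$ for some $j$. Since $\widehat A$ is an admissible choice in the infimum defining $h$, we have $h(X_i,Y_i,O_j) \leq L(X_i,Y_i,\widehat A)$ for every $i$, and averaging gives $H_n(O_j) \leq B_n(\widehat A) = 0$, contradicting $H_n(O_j)\to+\infty$. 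Hence on that event $\widehat A\in\mathcal N$ for all large $n$. Intersecting over the family $\mathcal N_k$ completes the argument.

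The main obstacle I anticipate is the topological bookkeeping around the cosmic closure $\mathbf{A}^{d^2}$: one must verify it is genuinely a compact Hausdorff (metrizable) compactification on which the map $(v,\lambda)\mapsto\lambda\cdot m(v)$ and the liminf-extended loss $L$ are well behaved, that the argmin in (\ref{eqn:fe}) is attained there, and — most delicately — that the neighborhoods furnished by Assumption \ref{ass:ic} for points other than $A_0$ may be shrunk so as to force $E(O)=+\infty$ (i.e. that the almost-sure limit identity persists for sufficiently small neighborhoods, which is where the proof leans on the strengthened form of Assumption \ref{ass:ic} proved in the next subsection). Once these points are settled, the remainder is a standard Wald-type compactness-plus-strong-law skeleton, with the $\{0,+\infty\}$-valued structure of $L$ making the ``an incorrect parameter is eventually detected'' step unusually clean.
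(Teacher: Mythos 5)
Your proposal is correct and follows essentially the same route as the paper: a Wald-type argument that covers the compact complement of a neighborhood of $A_0$ in $\mathbf{A}^{d^2}$ by finitely many sets $O_j$ on which $H_n(O_j)\to E(O_j)>0$ almost surely, and then compares against $B_n(\widehat A)\leq B_n(A_0)=0$ via $h(X_i,Y_i,O_j)\leq L(X_i,Y_i,\widehat A)$. The only cosmetic difference is that you exploit the $\{0,+\infty\}$-valued structure of $L$ directly, while the paper retains Wald's $\epsilon$-bookkeeping and a Monotone Convergence Theorem step to shrink the neighborhoods of Assumption \ref{ass:ic}; both versions lean identically on the forward reference to the identifiability proposition for the shrinkability of those neighborhoods.
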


\begin{proof}
Fix an open neighborhood $U$ around the matrix $A_0$. Because $A_0\in\mathbb{R}^{d\times d}$, the set $U$ can be represented as $U = \{\lambda\cdot m(v) : (v,\lambda)\in S\}$ for some $S \subset \mathbb{S}^{d^{2}-1}\times\mathbb{R}_+$. Recalling the definition of $V(\cdot)$, we know there exists $\epsilon > 0$ such that $V(A) \geq 3\epsilon + V(A_0)$ for $A\in \mathbf{C}(S)$, where
\begin{equation}
\mathbf{C}(S) = \{\lambda\cdot m(v) : (v,\lambda)\in\mathbf{A}^{d^2} \setminus S\}.
\end{equation}
For the next step, consider any fixed point $(v,\lambda)$ in $\mathbf{A}^{d^2} \setminus S$. Let $\{N_{k}(v,\lambda)\}_{k\geq 1}$ be a sequence of open balls that shrink to $(v,\lambda)$ as $k \rightarrow \infty$. Since $L(X,Y,A)$ is lower semicontinuous, it follows from the definition of $h(\cdot)$ that $\lim_{k \rightarrow \infty} h(X,Y,N_{k}(v,\lambda)) = L(X,Y,\lambda\cdot m(v))$. Since $A \in \mathbf{A}^{d^2} \setminus S$, the Monotone Convergence Theorem says there is an open neighborhood $N(v,\lambda)\subseteq O(v,\lambda)$ with
\begin{equation}
\label{eqn:uwp}
E(N(v,\lambda)) \geq V(A) - \epsilon \geq V(A_0) + 2\epsilon.
\end{equation}
\pedro{Now, since $\mathbf{A}^{d^2} \setminus S$ has been compactified then by one definition of a compact set there exists} a finite subcover $\mathcal{B}_1,\ldots,\mathcal{B}_z$ of neighborhoods $N(v,\lambda)$ centered around $(v_1,\lambda_1),\ldots,(v_z,\lambda_z)$. This means $\mathbf{A}^{d^2} \setminus S \subseteq \bigcup_{k=1}^z \mathcal{B}_k$, and 
\begin{equation}
\label{eqn:uwp2}
\textstyle\inf_{A \in \mathbf{C}(S)}B_{n}(A)  \geq \min_{k} \frac{1}{n} \sum_{i=1}^{n} h(X_i,Y_i,\mathcal{B}_k).
\end{equation}
\pedro{Using Proposition \ref{ass:ic} with (\ref{eqn:uwp}) and (\ref{eqn:uwp2}) implies 
\begin{equation}
\textstyle\lim_{n\rightarrow\infty}\inf_{A \in \mathbf{C}(S)} B_{n}(A) \geq V(A_0) + 2\epsilon \text{ a.s.}
\end{equation}}
By definition of (\ref{eqn:fe}) and $B_n(\cdot)$, $\widehat{A}$ minimizes $B_n(\cdot)$; hence, for almost all sample paths $\omega$ it follows that there exists $N$ such that for all $n > N$ we have
\begin{equation}
\textstyle B_{n}(\widehat{A}) \leq B_{n}(A_0) < V(A_0) + \epsilon < \inf_{A \in \mathbf{C}(S)} B_{n}(A).
\end{equation}
This implies that $\widehat{A}\in U$ for all $n > N$. We complete the proof by letting the neighborhood $U$ shrink to $\{A_{0}\}$.
\end{proof}
The above theorem proves consistency of the feasibility estimator (\ref{eqn:fe}). Consistency of the general estimator (\ref{eqn:est}) follows as a direct corollary for well-behaved loss functions.
\begin{corollary}
\label{cor:fo}
Suppose the loss function $l(X,Y,A)$ is continuous. Then the general estimator (\ref{eqn:est}) is strongly consistent, meaning $\lim_{n\rightarrow \infty} \widehat{A} = A_{0} \text{ a.s.}$.
\end{corollary}
\begin{proof}
Since $l(X,Y,A)$ is continuous, any $A$ feasible for (\ref{eqn:est}) is feasible for (\ref{eqn:fe}). Also, $A_0$ is feasible for (\ref{eqn:est}).
\end{proof}

\section{Numerical Experiments}

\label{sec5}

We demonstrate consistency of our estimator (\ref{eqn:est}) through two experiments. The first compares (\ref{eqn:est}) to OLS on identification for a dynamics matrix where OLS is inconsistent. The second uses (\ref{eqn:est}) to construct a switching control policy that identifies the stable mode of a switched linear system.

\subsection{Comparison to OLS}

Our first numerical experiment uses a single (i.e., no switching) state dynamics matrix that is given by
\begin{equation}
\label{eqn:a2}
A_{2} = \begin{bmatrix}
0\hphantom{.00} & 1.1\hphantom{0} & 0\hphantom{.00} & 0\hphantom{.00} \\ 1.1\hphantom{0} & 0\hphantom{.00} & 0\hphantom{.00} & 0\hphantom{.00} \\ 0\hphantom{.00} & 0\hphantom{.00} & 1.1\hphantom{0} & 0\hphantom{.00} \\ 0\hphantom{.00} & 0\hphantom{.00} & 0\hphantom{.00} & 1.1\hphantom{0}
\end{bmatrix} 
\end{equation}
This matrix is unstable since it has $\rho(A_2) = 1.1$. Moreover, the eigenvalue 1.1 has a geometric multiplicity of \pedro{three}. This means OLS is inconsistent when estimating $A_2$ from $X_t$ even in the absence of switching \cite{nielsen2008singular,phillips2013inconsistent}. In contrast, our estimator (\ref{eqn:est}) is consistent by Corollary \ref{cor:fo}. This is verified by Fig. \ref{fig:comp}, which shows results of a simulation with process noise that has uniform distribution with support $\textbf{W} = [-1,1]^4$. The estimation error of OLS remains nonzero, whereas the estimation error of (\ref{eqn:est}) using the loss function $l(X_i,Y_i,A) = \|Y_{i} - AX_{i}\|^2_2$ rapidly converges towards zero.

\begin{figure}
\centering
\includegraphics[scale=0.95]{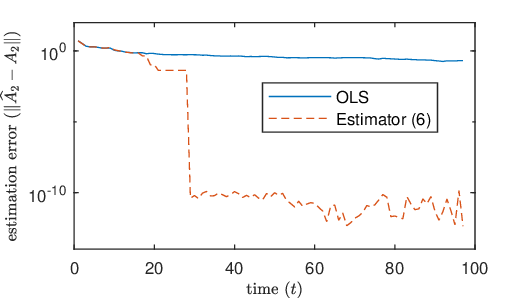}
\caption{Estimation Error From Trajectory by $A_2$ Without Switching}
\label{fig:comp}
\end{figure}

\subsection{Greedy Bandit Policy}

We next consider the setup in Sect. \ref{Sec3}, constrained so that there exists $s\in\{1,\ldots,q\}$ with $\rho(A_s) < 1$ and $\rho(A_p) >1$ for all $p\in\{1,\ldots,q\}\setminus\{s\}$. We specifically exclude the case $\rho(A_p) = 1$. Though (\ref{eqn:est}) is consistent when $\rho(A_p) = 1$, the policy we construct requires this assumption. We construct a policy that inputs the sequence $X_0,\ldots,X_t$ and $\alpha_0,\ldots,\alpha_{t-1}$ and chooses a control action $\alpha_t\in\{1,\ldots,q\}$ that identifies the stable mode while maintaining stability of the closed-loop system. This problem can be interpreted as a multi-armed bandit \cite{lai1985asymptotically,abbasi2011online,mintz2017non}, which involves a tradeoff between choices that: explore to learn more about the relevant distributions, and exploit by choosing the optimal (according to current estimates) actions. However, under specific assumptions a greedy algorithm can be (asymptotically) optimal \cite{gittins1979bandit,mersereau2009structured}.



Our procedure is Algorithm \ref{Alg}, and we use the loss function $l(X_i,Y_i,A) = \|Y_{i} - AX_{i}\|^2_2$ for (\ref{eqn:est}). We wish to identify the stable dynamics in finite time, because then the system can be brought to a stochastic equilibrium by selecting only the stable dynamics. The key idea is to use our estimator, which is consistent for all possible structures of $A$, once we group measurements as discussed in Sect. \ref{Sec3}. Note this algorithm greedily selects an arm with estimated spectral radius strictly smaller than 1. If at any given time $t$, no such arm exists, then we randomly select an arm and update the estimates. We can prove this algorithm maintains closed-loop stability:

\begin{figure}[t]
  \makebox[\linewidth]{%
  \begin{minipage}{\linewidth}
    \begin{algorithm}[H]
    	\caption{Greedy Bandit Algorithm}
    	\label{Alg}
    	\begin{algorithmic}[1]
    		\Require set $\{1,...,q\}$ of candidate systems. initial state $X_0$
            \For {systems $p \in \{1,...,q\}$:}
	    		\State	select system $p$
					\State obtain new measurement $X^{(p)}_{(1)}$
					\State set $n_p \leftarrow 1$
	    		\State	compute estimate $\widehat{A}_{p}$ using (\ref{eqn:est})
	    		\State compute estimate of spectral radius: $\hat{\rho}_{p} = \rho(\widehat{A}_{p})$
	    	\EndFor
	    	\For {each time instant $t > q$:}	
	    		\If {$\min_{p}\{\hat{\rho}_{p}\}\geq 1$} 
	    				\State randomly select a system $p$ 
							\State obtain new measurement $X^{(p)}_{(n_p+1)}$
							\State set $n_p \leftarrow n_p + 1$
	    			 	\State	compute estimate $\widehat{A}_{p}$ using (\ref{eqn:est})	\State compute estimate of spectral radius: $\hat{\rho}_{p} = \rho(\widehat{A}_{p})$
	    		\Else
	    			\State select any system $p$ such that $\hat{\rho}_{p} < 1$.
						\State obtain new measurement $X^{(p)}_{(n_p+1)}$ 
						\State set $n_p \leftarrow n_p + 1$
	    			\State compute estimate $\widehat{A}_{p}$ using (\ref{eqn:est})		  	\State compute estimate of spectral radius: $\hat{\rho}_{p} = \rho(\widehat{A}_{p})$
	    		\EndIf    		
    		\EndFor
    	\end{algorithmic}
    \end{algorithm}
\end{minipage}
}
\end{figure}

\begin{proposition}
Algorithm \ref{Alg} chooses the dynamics matrix $A_s$ infinitely many times and chooses the dynamics matrices $A_p$ for $p\in\{1,\ldots,q\}\setminus\{s\}$ only finitely many times.
\end{proposition}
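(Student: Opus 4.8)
The plan is to combine the strong consistency of the estimator (Corollary~\ref{cor:fo}) with a case analysis on how often each arm is selected, using a conditional Borel--Cantelli argument to control the random-exploration branch. Write $n_p(t)$ for the number of times system $p$ has been chosen through time $t$, and note that if $n_p(t)\to\infty$ along a run, then Corollary~\ref{cor:fo} gives $\widehat{A}_p\to A_p$ a.s., hence $\hat{\rho}_p\to\rho(A_p)$ a.s. by continuity of the spectral radius. Since $\sum_p n_p(t)=t$, at least one system is chosen infinitely often.

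First I would show the stable system $s$ is chosen infinitely often a.s. Suppose not, i.e., on a positive-probability event $s$ is chosen only finitely often, so its held estimate $\hat{\rho}_s$ is eventually frozen. Split on whether the random-selection branch ($\min_p\hat{\rho}_p\ge 1$) is entered infinitely often. If it is, then at each such time $s$ is picked with conditional probability $1/q$ given the history, so a conditional version of the second Borel--Cantelli lemma (L\'evy's extension) forces $s$ to be chosen infinitely often a.s., a contradiction. If the random branch is entered only finitely often, then after some a.s.-finite time the algorithm always takes the ``else'' branch and hence always picks a system whose current $\hat{\rho}$ is $<1$; since $s$ is picked only finitely often, some unstable $p\neq s$ is picked infinitely often through this branch, which forces $\hat{\rho}_p<1$ at infinitely many of its selection times. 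But $n_p(t)\to\infty$ then gives $\hat{\rho}_p\to\rho(A_p)>1$ a.s., contradicting $\hat{\rho}_p<1$ infinitely often. Either way we reach a contradiction, so $s$ is chosen infinitely often a.s.

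Next I would show every unstable $p\neq s$ is chosen only finitely often. Since $s$ is chosen infinitely often, $n_s(t)\to\infty$, so $\hat{\rho}_s\to\rho(A_s)<1$ a.s.; hence there is an a.s.-finite (random) time $T$ after which the held value of $\hat{\rho}_s$ is always $<1$. For $t\ge T$ we then have $\min_p\hat{\rho}_p<1$, so the random branch is never entered after $T$ and the algorithm always takes the ``else'' branch, picking a system with current $\hat{\rho}<1$. If some unstable $p$ were chosen infinitely often, then $n_p(t)\to\infty$ and $\hat{\rho}_p\to\rho(A_p)>1$ a.s., so $\hat{\rho}_p\ge 1$ for all large $n_p$, i.e., $p$ becomes ineligible for the ``else'' branch, contradicting that it is picked there infinitely often. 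Hence each $A_p$, $p\neq s$, is chosen only finitely many times, which together with the previous paragraph gives the claim.

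I expect the main obstacle to be the first step: ruling out the scenario in which a few unlucky early samples make $\hat{\rho}_s\ge 1$ so the algorithm ``abandons'' the stable arm forever. The resolution is exactly the random-exploration branch, and making it rigorous needs a conditional second Borel--Cantelli lemma (the exploration rounds are not independent, but the event that $s$ is the random pick has conditional probability $1/q$ given the history), together with care that ``chosen infinitely often'' is precisely the event on which Corollary~\ref{cor:fo} may be invoked for a given arm. A secondary technical point is that Corollary~\ref{cor:fo} is stated for a switching sequence fixed in advance, whereas Algorithm~\ref{Alg} generates an adaptive, random sequence; one should note that the identifiability argument of Sect.~\ref{sec4} uses only the i.i.d.\ structure of the noise and is unaffected by this, or handle it with a routine measurable-selection refinement.
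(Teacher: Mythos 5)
Your proof is correct, and its skeleton is the same as the paper's: invoke Corollary~\ref{cor:fo} to get $\hat{\rho}_p\to\rho(A_p)$ for any arm selected infinitely often, then use the branching logic of Algorithm~\ref{Alg} to derive a contradiction. The difference is that you are more complete. The paper's proof is a one-paragraph contradiction: if an unstable $A_p$ is chosen infinitely often then eventually $\hat{\rho}_p>1$, so $p$ can only be re-selected through the random branch, which forces $\hat{\rho}_s\geq 1$ infinitely often, and this is declared to contradict $\hat{\rho}_s\to\rho_s<1$. That last step silently assumes $n_s\to\infty$; it does not rule out the scenario you explicitly worry about, where a few unlucky early samples leave the frozen estimate $\hat{\rho}_s\geq 1$ and the stable arm is never revisited, so that Corollary~\ref{cor:fo} cannot be applied to arm $s$ at all. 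Your first step --- splitting on whether the random-exploration branch fires infinitely often and using L\'evy's conditional second Borel--Cantelli lemma to show that infinitely many uniform draws must eventually hit $s$ --- is exactly the missing ingredient that closes this circularity, and your second step then recovers the paper's conclusion cleanly. Your closing caveat about Corollary~\ref{cor:fo} being stated for a switching sequence fixed in advance, versus the adaptive, state-dependent sequence the algorithm generates, is also a legitimate subtlety the paper does not acknowledge; your proposed resolution (the identifiability argument only uses that $w_t$ is i.i.d.\ and independent of the selection event at time $t$) is the right one, though it deserves a sentence of justification rather than a parenthetical. In short: same strategy, but your version repairs a genuine gap in the published argument at the cost of one extra probabilistic tool.
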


\begin{proof}
We prove this by contradiction. Suppose there is a $p\in\{1,\ldots,q\}\setminus\{s\}$ such that the unstable dynamics $A_p$ is chosen infinitely many times. Since spectral radius is a continuous function \cite{kato2013perturbation}, combining the Continuous Mapping Theorem \cite{bickel2015mathematical,aswani2017statistics} with Corollary \ref{cor:fo} implies $\lim_{n\rightarrow\infty}\hat{\rho}_p = \rho_p > 1 \text{ a.s.}$; hence $\hat{\rho}_p < 1$ only finitely many times. Thus by construction of the algorithm, this means $A_p$ can be chosen by line 16 of the algorithm only finitely many times. So if $A_p$ is chosen infinitely often, this means it must be chosen by line 10 infinitely often. However, if this occurs then we must have that $\hat{\rho}_s > 1$ infinitely often.  However, again combining the Continuous Mapping Theorem with Corollary \ref{cor:fo} implies $\lim_{n\rightarrow\infty}\hat{\rho}_s = \rho_s < 1 \text{ a.s.}$ This is a contradiction.
\end{proof}

We conducted a numerical simulation to demonstrate the stabilizing behavior of our Algorithm \ref{Alg}. In the scenario we simulated, the process noise had a uniform distribution with support $\textbf{W} = [-1,1]^4$. In addition to $A_2$ as defined in (\ref{eqn:a2}), we used the state dynamics matrices
\begin{equation}
A_{1} = \begin{bmatrix}
0.76 & 0\hphantom{.00} & 1.6\hphantom{0} & 1.6\hphantom{0} \\ 0\hphantom{.00} & 0.78 & 0\hphantom{.00} & 1.6\hphantom{0} \\ 0\hphantom{.00} & 0\hphantom{.00} & 0.79 & 0\hphantom{.00} \\ 0\hphantom{.00} & 0\hphantom{.00} & 0\hphantom{.00} & 0.79
\end{bmatrix} 
\end{equation}
\begin{equation}
A_{3} = \begin{bmatrix}
0.91 & 0.7\hphantom{0} & 0\hphantom{.00} & 0\hphantom{.00} \\ 0.7\hphantom{0} & 0\hphantom{.00} & 0\hphantom{.00} & 0\hphantom{.00} \\ 0\hphantom{.00} & 0\hphantom{.00} & 0.28 & 0\hphantom{.00} \\ 0\hphantom{.00} & 0\hphantom{.00} & 0\hphantom{.00} & 1.05
\end{bmatrix} 
\end{equation}
\begin{equation}
A_{4} = \begin{bmatrix}
0\hphantom{.00} & 0\hphantom{.00} & 0.98 & 0\hphantom{.00} \\ 0\hphantom{.00} & 0\hphantom{.00} & 0\hphantom{.00} & 0.77 \\ 0.98 & 0\hphantom{.00} & 0.56 & 0\hphantom{.00} \\ 0\hphantom{.00} & 0.84 & 0\hphantom{.00} & 0.14
\end{bmatrix} 
\end{equation}
Note $\rho(\bar{A}_1) = 0.7900$, $\rho(\bar{A}_2) = 1.1000$, $\rho(\bar{A}_3) = 1.2899$, and $\rho(\bar{A}_4) = 1.2992$. This means $A_1$ is Schur stable while the other matrices $A_2,A_3,A_4$ are not Schur stable. However, $\|\bar{A}_1\| = 2.9136$, whereas $\|\bar{A}_2\| = 1.1000$, $\|\bar{A}_3\| = 1.2899$, and $\|\bar{A}_4\| = 1.2992$. This shows the importance of working with the spectral radius rather than using the spectral norm.

Numerical results of one simulation run are shown in Fig. \ref{fig:onesim}. Our other simulation runs had behavior that was qualitatively similar to the results we present here. At the beginning, the algorithm tries different arms. After a certain amount of tries of the different arms, the algorithm is able to identify which arm corresponds to the stabilizing mode. When the algorithm is trying different arms, the state grows at an exponential rate; however, once the stabilizing arm is found then the state fluctuates about the origin because of the process noise and the stabilizing action of that arm. 

\begin{figure}[t]
	\begin{center}
		\begin{subfigure}[t]{\linewidth}
			\includegraphics[scale=0.93]{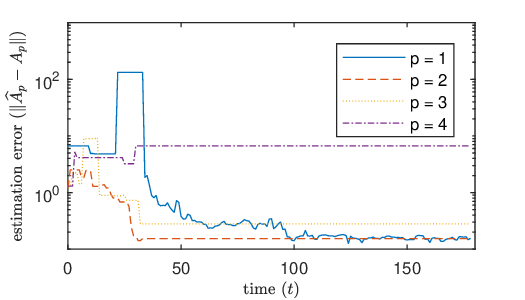}
			\caption{Estimation Error Using Our Estimator (\ref{eqn:est})}
		\end{subfigure}\\
		\begin{subfigure}[t]{\linewidth}
			\includegraphics[scale=0.93]{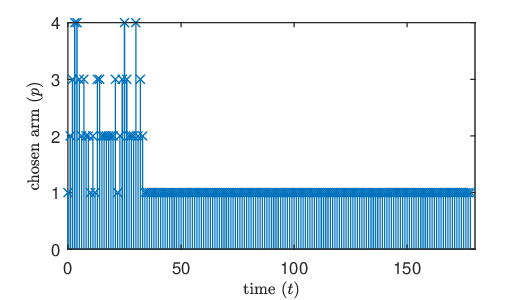}
			\caption{Arm $p$ Chosen by Algorithm}
		\end{subfigure}\\
		\begin{subfigure}[t]{\linewidth}
			\includegraphics[scale=0.93]{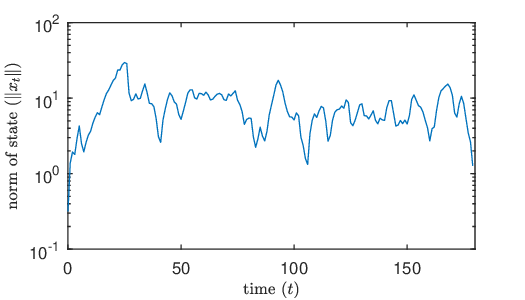}
			\caption{Norm of System State}
		\end{subfigure}
	\end{center}
	\caption{\label{fig:onesim} One Simulation Run of Algorithm \ref{Alg}}
\end{figure}

\section{Conclusion}

\label{sec6}


We proved statistical consistency of the set-membership estimator for identification of switched linear systems, and we demonstrated its consistency  through two numerical examples.

\bibliographystyle{IEEEtran}
\bibliography{IEEEabrv,secure}

\end{document}